 \newtheorem{prop}{Proposition} 
 \newtheorem{defi}{Definition} 
\newcommand{\Hom}{\mathop{\mathrm{Hom}}}
\newcommand\seq\vdash 
\newcommand\eee{\mathbf{e}}
\newcommand\ttt{\mathbf{t}}
\newcommand\type[1]{^{#1}}
\newcommand\et{\mathop{\&}} 
\newcommand\imp\rightarrow 
\newcommand\Sh{\mathbf{Sh}}
\newcommand\ccc{CCC} 
\newcommand\fl\rightarrow 
\newcommand\ma[1]{``\emph{#1}"}
\begin{document}

\author{Jean Gillibert, Christian Retor{\'e}} 
\title[Category theory, logic and formal linguistics: 
some connections, old and new]{Category theory, logic and formal linguistics:\\ 
some connections, old and new} 
\address{IMB, LaBRI --- Universit\'e de Bordeaux}
\maketitle 
\begin{abstract}
We seize the opportunity of the publication of selected papers from the  \emph{Logic, categories, semantics}  workshop in the \emph{Journal of Applied Logic}  to survey some current trends in logic, namely intuitionistic and linear type theories,  that interweave categorical, geometrical and computational considerations. We thereafter present how these rich logical frameworks 
can model the way language conveys meaning. 
\newcommand\sep\quad 

\noindent\textsc{Keywords:} Logic \sep Type theory \sep Category theory \sep Formal linguistics 

\noindent\textsc{AMS classification:} 18A15 \sep  03B65 \sep 03B15 \sep  03B40 \sep  68T50 

\end{abstract}



\section{A seminar and workshop on category theory, logic and linguistic applications}

The present issue of the \emph{Journal of Applied Logic} gathers a selection of papers presented at a workshop \emph{Logic, categories, semantics} held in Bordeaux in November 2010. 
This workshop was organised as a fitting conclusion to
the activities of a weekly reading group a weekly reading group called \emph{Sheaves in logic and in geometry} in 2009/2010
and \emph{Logic, categories, geometry} in 2010/2011. 

Those activities are common to the maths and computing departments of the University of Bordeaux (IMB-CNRS and LaBRI-CNRS). Although he did not contribute to this introduction, we must thank Boas Erez (IMB, Bordeaux) who was an enthusiastic participant and speaker in our reading group. We would like to thank for their scientific and financial support:   our tow departments, (Institut de Math\'ematiques de Bordeaux, Laboratoire Bordelais de Recherche en Informatique), INRIA Bordeaux Sud-Ouest, the ANR project LOCI, and the project ITIPY (Aquitaine Region) and the French mathematical society (SMF). 

Unfortunately, the material in this issue of the \emph{Journal of Applied Logic} does not cover all the great talks we had the privilege  to hear, 
but one can find published material that more or less cover these talks (and these references themselves  include further references): 
\begin{itemize}
\item 
Pierre Cartier 
(IHES, Bures)	\emph{Sur les origines de la connexion entre logique intuitionniste, cat\'egories et faisceaux}, 
cf. \cite{Cartier1979bourbaki}; 
\item 
Jean-Yves Girard 
(CNRS, IML, Marseille)	\emph{Interdire ou r\'efuter? Le statut ambigu de la normativit\'e}, cf. 
\cite{Girard2012normativity}; 
\item 
Paul-Andr\'e Melli\`es (CNRS, PPS, Paris)
\emph{Logical proofs understood as topological knots}, cf. \cite{Mellies2012lics}; 
\item 
Michael Moortgat 
(Universiteit Utrecht)	\emph{Continuation semantics for generalized Lambek calculi}, cf. \cite{MoortgatMoot2013grishin};  
\item
Carl Pollard (Ohio State University, Columbus)
\emph{Remarks on categorical semantics of natural language}, cf. \cite{Pollard2011lacl}.
\end{itemize} 

\section{Intuitionistic logic} 
\label{intui} 

Type theory and categorical logic, which are at the hear of this issue, all rely on intuitionistic logic.
Even  linear logic, which is also discussed by two articles of this issue,  may be viewed as a refinement of intuitionistic logic. 

It is quite a challenge to explain briefly what intuitionistic logic is and how it relates to other logical trends. 
Regarding the historical development of logic, we refer to the standard \cite{KK86}
and regarding a standard logic master course we refer the reader to the standard 
textbook  \cite{vanDalen2013}
which includes a chapter on intuitionistic logic. 

Logic is concerned with``truth", and 
 let us vaguely say   that ``truth" holds or not 
of formulae that  are formalisations of common language sentences, or of mathematical statements, 
and as such logic has at least two facets. 
A formula $F$ can be true because it is correctly derived from 
formulae that are already established or assumed to be true, and this is the proof theoretical view of truth that started with Aristotle.  The same formula 
$F$ can also  be true in all or in some situation once the symbols in the formula are properly interpreted: that's the model theoretical view of truth, which is much more recent. 
Besides these two well established viewpoints, there exists a slightly different view of ``truth" with Ancient origins, dialectics which emerge from the interaction
between proofs and refutations.  
Nowadays this view culminates in game theoretical semantics, and ludics,
and the presentations by  J.-Y. Girard  and P.-A. Melliès during our workshop developed this interactive view. \cite{Girard2012normativity,Mellies2012lics}

An important theorem, due to G\"odel   as  
an impressive number of the fundamental results logic, is that for first order classical logic, 
the two notions of truth coincide: a formula is true in any model if and only if it can be proved. 
This result admits a a more striking formulation as: a formula $C$  can be derived from formulae $H_1,\ldots,H_n$ 
if and only if any model satisfying $H_1,\ldots,H_n$ satisfies $C$ as well. 
Such a result holds for classical propositional logic and for classical first-order logic, also known as predicate calculus. 

Intuitionistic logic comes from doubts that appeared during the crisis of the foundations of mathematics,
between the XIX and XX centuries.
These doubts concerned existential statements in infinite sets. For instance it is admittedly a bit strange, especially in an infinite universe,  that 
$\exists x (B(x) \implies \forall y B(y))$, although it is absolutely correct in classical logic using principles like  $U\implies V\equiv \lnot U \lor V$, $\lnot\lnot A\equiv A$, $\lnot \forall x A(x)\equiv \exists x \lnot A(x)$. 
According to intuitionistic logic,  \emph{tertium non datur}, or any of the principles that are equivalent to it,  like \emph{reductio ad absurdum} or \emph{Pierce's law}, should be left out from deduction principles.

When restricting ourselves to intuitionistic logic both facets of truth are modified: 
\begin{itemize}
\item In order to maintain the completeness results, models need to be more complex: the usual models ought to be replaced with structured families of classical models, like Kripke models or (pre)sheaves, cf. section \ref{sheaves}. 
\item 
Intuitionistic proofs are more interesting: as opposed to classical proofs, they can be viewed as algorithms (as programs of a typed functional programming  language) 
or as morphisms in a category (which is not a poset, i.e. with many morphisms from an object to another). 
\end{itemize} 

Each of these two aspects brings connections with category theory. The first one yields models  of higher order intuitionistic logic, e.g. with (pre)sheaves of $\mathcal{L}$-structures (classical models, roughly speaking).  The second one yields models that interpret proofs as morphisms in a cartesian closed category. 

\section[A historical connection between logic and geometry: (pre)sheaves, toposes and  categorical models of 
intuitionistic logic]{A historical connection between logic and geometry:\\ (pre)sheaves, toposes and  categorical models of 
intuitionistic logic}
\label{sheaves} 

Pierre Cartier  reminded us in the inaugural lecture of the workshop that the connection between intuitionistic logic and topos theory appeared in the sixties. We refer the reader to
 \cite{Cartier1979bourbaki} for a survey, and to 
 \cite{MacLaneMoerdijk1992}
 for a thorough treatment, and to \cite{Mac71} for an excellent textbook on category theory. 

Inspired by the work of Eilenberg and Mac Lane on categories, Lawvere started in 1964 to introduce a categorical approach to  set theory, and Lambek was looking at categories as deductive systems, the 1972 book edited by Lawvere \emph{Toposes, Algebraic Geometry, and Logic}  gives a pretty good idea of the landscape at the end of the 60s. 
\cite{Lawvere72}

At the same time, Grothendieck was introducing groundbreaking concepts  in order to define a topology on an algebraic variety that would play a similar role to the topology of the complex points, namely the \'etale topology. The first step in Grothendieck's construction is the notion of site, that is, a category endowed with a notion of covering that is reminiscent of open coverings in topology.  
The concept of site does not exactly capture the`topology" of a situation, since different sites can give rise to the same category of sheaves.  Thus Grothendieck chose to take a category of sheaves of sets over a given site as a notion of generalized topological space, and he called such a category a topos
But another important idea of Grothendieck was somehow more surprising: the \'etale site is just a step in the construction, the final notion to study is the category of sheaves of sets on the site, that he calls a topos. This category has many properties in common with the category of sets, and possesses a good notion of localisation. These radically new ideas were first developed by Grothendieck and his group in \cite{SGA4}. 


Taking into account this new approach, Lawvere and Tierney  in \cite{Lawvere1970congres} gave before 1970 a new definition of topos (elementary toposes), which does not refer to a site. One of the avantages of this theory is that each topos contains an object $\Omega$, that plays the role of all possible truth values, and has the (internal) structure of a Heyting algebra. Hence intuitionist logic naturally appears as being the internal logic of the topos.

This connection with intuitionistic logic was formulated by Lawvere at the same time 
 \cite{Lawvere1970congres}. 
Shortly afterwards, Lawvere and Tierney  in \cite{Lawvere72} discovered that models of set theory corresponding to proofs of the independence of the axiom of choice and continuum hypothesis by Cohen's method of forcing \cite{cohen1966set} were in fact elementary toposes see e.g. \cite{MacLaneMoerdijk1992,Borceux1994hdbk3}. 


On the other hand, the internal logic of the topos shed a new light on categories of sheaves. 
Namely, one can handle sheaves as if they were sets with elements but with respect to the internal language, but when reasoning about these elements one should follow should follow the rules of intuitionistic logic. 

In order to fix ideas, let us recall that a \emph{category} $\mathcal{C}$ is defined by the following data: (a) a class of objects; (b) a class of arrows (or morphisms) whose domain and codomain are objects of $\mathcal{C}$; (c) a partial binary operation on arrows called composition, required to be associative; (d) every object is endowed with an identity arrow.  If for any objects $A$ and $B$ of $\mathcal{C}$, the class $\Hom(A,B)$ of all morphisms from $A$ to $B$ is  a set, the category is said to be \emph{locally small}, and most of the useful categories are locally small. 
If the class of objects and of morphisms of $\mathcal{C}$ are sets, $\mathcal{C}$ is said to be \emph{small}. 

It is easy to define morphisms of categories, which for historical reasons are called \emph{functors}.

A category can be seen as a directed graph with a composition operation on arrows. The most basic examples of categories are the category of sets, the category of groups, the category of $K$-vector spaces where $K$ is a field, the category of topological spaces, etc.
Roughly speaking, categories allow to handle most kind of structural questions in mathematics.

We will now focus our attention on sheaves, which are central in this story. Let $X$ be a topological space. A presheaf of sets is a contravariant functor from the category of open subsets of $X$ (a poset, actually) to the category of sets\footnote{Explicitely, a presheaf $A$ is a family of sets $A(U)$ (where $U$ runs through all open subsets of $X$) together with restriction maps $A(U)\to A(V)$ for any inclusion $V\subseteq U$ between open subsets of $X$. Of course, these restriction maps can be composed, that is, if $W\subseteq V \subseteq U$ then the restriction map $A(U)\to A(W)$ is equal to the composition $A(U)\to A(V)\to A(W)$.}.

A morphism of presheaves $\phi:A\to B$ is given by a family of maps $\phi_U:A(U)\to B(U)$ (where $U$ runs through all open subsets of $X$) that is compatible with restriction maps in the following sense: for any inclusion $V\subseteq U$ between open subsets of $X$, the following diagram commutes
$$
\begin{CD}
A(U) @>\phi_U>> B(U) \\
@VVV @VVV \\
A(V) @>\phi_V>> B(V) \\
\end{CD}
$$
where vertical maps are the restrictions induced by the inclusion $V\subseteq U$.

It is worth noticing that (pre)sheaves of $\mathcal{L}$-structures of some theory $\mathcal{T}$ over a topological space --- whose open sets can be viewed as a  poset category that is a Heyting algebra ---   define a notion of model that is complete for intuitionistic logic. 
An $\mathcal{L}$-structure is a set with an interpretation of the first order language $\mathcal{L}$, and the morphisms between $\mathcal{L}$-structures, in particular the restrictions morphisms of the sheaf, are functions that preserves function symbols and the 
truth of atomic formulae --- hence they are more general than the usual elementary morphisms of model theory which preserve the truth of all formulae. 
This construction due to Joyal is related to forcing and is a particular case of Kripke models \cite{Kripke59jsl}
and therefore is known as Kripke-Joyal forcing. \cite{BoileauJoyal1981} The proof of completeness of (pre)sheaf models can be inferred from results in \cite{TD88v2}; recently, 
 Ciardelli obtained a short and direct proof of this completeness result \cite{CIardelli2011tacl}. 

A sheaf of sets is a presheaf $\mathcal{F}$ which satisfies the following condition: for any open subset $U$ of $X$ and any family $(U_i)$ of open subsets of $X$ such that $U=\cup U_i$, there is an equaliser diagram
$$
\mathcal{F}(U)\rightarrow \prod_i \mathcal{F}(U_i) \rightrightarrows \prod_{j,k} \mathcal{F}(U_j\cap U_k)
$$
(this means that the first map is injective and its image is exactly the set where the two other maps coincide\footnote{The first map is induced by inclusions $U_i\subseteq U$, and the two other maps are respectively induced by inclusions $U_j\cap U_k\subseteq U_j$ and $U_j\cap U_k\subseteq U_k$.}).

In other words, a ``compatible'' family of local sections of $\mathcal{F}$ can be glued---in a unique way---into a global section of $\mathcal{F}$ i.e. a $U$-section of $\mathcal{F}$. This is exactly what one needs in order to perform ``local to global'' yoga.

By definition, a morphism of sheaves is just the same as a morphism of presheaves. Hence the category of sheaves on $X$, that we denote by $\Sh(X)$, is a full subcategory of the category of presheaves. In order to describe its properties, we introduce the notion of cartesian closed category (\ccc).

\begin{defi}
\label{ccc} 
A category is said to be cartesian closed whenever: 
\begin{enumerate}
\item \label{terminal} It has a terminal object here noted $\mathbf{1}$ (it can be noted $T$);
\item  \label{product}  Any two objects $A$ and $B$ have a product written $A\times B$; 
\item \label{exponential} Any two objects $A$ and $B$ have an exponential noted $B^A$ or $A\fl B$.
\end{enumerate} 
\end{defi} 
In more detail, \ref{terminal} a terminal object is an object $\mathbf{1}$ such that for any object $A$ there exists a unique arrow $A\to \mathbf{1}$; \ref{product}  if $A$ and $B$ are two objects, the product $A\times B$ (if it exists) is an object such that, for all $Y$, there is a bijection $\Hom(Y,A)\times\Hom(Y,B)\simeq\Hom(Y,A\times B)$ which is natural in $Y$ ; 
\ref{exponential} if $A$ and $B$ are two objects, the exponential noted $B^A$  (if it exists) is an object such that, for all $Y$, there is a bijection $\Hom(Y\times A,B)\simeq\Hom(Y,B^A)$ which is natural in $Y$. Another limit is useful, \emph{equaliser}: given tow morphisms $f$ and $g$ from $A$ to $B$, an equaliser of $f$ and $g$ is a morphism $e$ from $C$ to $A$ such that $f\circ e= g\circ e$ and for any morphism $d$ from $D$ to $A$ with $f\circ d= g\circ d$ there is a unique $k$ from $D$ to $C$ such that $e \circ k= d$.

Let us check that $\Sh(X)$ is cartesian closed. The constant sheaf $\mathbf{1}$ defined by $\mathbf{1}(U)=\{*\}$ for all $U$ is clearly a terminal object in $\Sh(X)$. The product of two sheaves, as well as the equaliser
 of sheaf morphisms with the same domain and codomain exist, hence all finite limits exist in the category of sheaves. 
Finally, exponential objects exist: for any two sheaves $A$ and $B$, we let $B^A$ be the sheaf $U\mapsto \Hom(A|_U,B|_U)$, where $\mathcal{F}|_U$ denotes the restriction of $\mathcal{F}$ to $U$\footnote{This sheaf $U\mapsto \Hom(A|_U,B|_U)$ is often called ``internal hom''.}.

More generally, the category $\Sh(X)$ inherits many properties from the category of sets, which is not surprising if we consider a sheaf as being a ``continuous family of sets''. This leads us to the next topic, namely elementary toposes.

\begin{defi}\label{topos}
According to Lawvere and Tierney,  an \emph{elementary topos} is a category $E$ satisfying the following properties:
\begin{enumerate}
\item $E$ has finite limits;
\item $E$ has exponentials;
\item $E$ has a subobject classifier.
\end{enumerate}
\end{defi} 

The first two conditions imply that a topos is a cartesian closed category. In fact, the main difference between \ccc\  and toposes is the existence of a subobject classifier.

Let us recall that $\Omega$ is a subobject classifier if, for any object $A$ of $E$, there is a natural isomorphism between subobjects of $A$ and the set of maps $A\to \Omega$. More precisely, $\Omega$ comes with a canonical map $\mathbf{1}\to\Omega$ (where $\mathbf{1}$ is the terminal object of $E$) which satisfies the following universal property:  for any monomorphism $f:B\to A$ there exists a unique morphism $g:A\to \Omega$ such that the square
$$
\begin{CD}
B @>>> \mathbf{1}\\
@VfVV @VVV \\
A @>g>> \Omega\\
\end{CD}
$$
is a fibered product or pullback (roughly speaking, $B$ is the inverse image of $\mathbf{1}$ by $g$).

In the category of sets, $\Omega=\{0,1\}$ (Boolean truth-values).

In the category of sheaves, the subobject classifier is defined as follows: for any open subset $U$ of $X$, we let $\Omega(U)$ to be the set of all open subsets of $U$. Roughly speaking, an assertion inside the topos $\Sh(X)$ may have an intermediate truth value, which from the viewpoint of an open subset $U$ is the open subset of $U$ where the assertion is true.

\emph{In this issue, the paper \emph{Continuity and geometric logic} by Steve Vickers emphasises the connection with topology by exploring geometric logic, that is many sorted logic with finite conjunctions and arbitrary disjunctions (also existential quantification). Given that classical models of a geometric theory 
$\mathbf{T}$  are not complete, one has to look models of $\mathbf{T}$ inside a topos, and there exists a classifying topos whose category of T models is universal. This is a point-free
manifestation of the ``space of models of $\mathbf{T}$ ".}

\section{Intuitionistic proofs, typed lambda calculus and cartesian closed categories}
\label{lambda} 

A second link between intutionistic logic and category theory is the following. 
Proofs in (propositional) 
intuitionistic logic can be viewed as typed lambda terms, that are functional programs.
The main idea  is that a proof of $A\imp B$ maps proofs of $A$ to proofs of $B$,
hence it is rather natural to interpret a formula (with $\land, \imp$) by the sets of its proofs.
Functions can be applied to arguments of the right type, and composed. 
To compute such a term, one substitutes the occurrences of the bound variable used 
to define the function with the argument of the function: 
$$(\lambda x^A.\ t^B)(u^A)\stackrel{\beta}{\rightsquigarrow}t[x^A{:}=u^A]$$ 
Values are normal terms i.e. terms that cannot be reduced any further. 

In figure \ref{curry} we give the proof rules and the corresponding proof terms, that are simply typed lambda terms, a good reference being \cite{GLT88} and \cite{LS86} on the categorical aspects. 

\newcommand\veczz{z_1{:}Z_1,\!...,z_p{:}Z_p}
\newcommand\veczzs{z_{\sigma(1)}{:}Z_{\sigma(1)},\!...,z_{\sigma(p)}{:}Z_{\sigma(p)}}
\newcommand\vecyy{y_1{:}Y_1,\!...,y_k{:}Y_k}

\begin{figure} 
\label{curry} 

\begin{center}
\begin{prooftree} 
axiom
\justifies 
x{:}A\seq x{:}A
\end{prooftree} 
\hspace*{10em} 
\begin{prooftree} 
\veczz \seq t{:}B
\justifies 
\veczzs \seq t{:}B
\using \begin{array}{c}exchange\\ \scriptstyle (\sigma: permutation)\end{array}  
\end{prooftree} 
\vspace*{3ex}

\begin{prooftree} 
\veczz, x{:}A,  x{:}A \seq t{:}B
\justifies 
\veczz, x{:}A \seq t{:}B
\using contraction 
\end{prooftree} 
\hspace*{6em} 
\begin{prooftree} 
\veczz, \seq t{:}B
\justifies 
\veczz, x{:}A \seq t{:}B
\using weakening  
\end{prooftree} 
\vspace*{3ex}

\begin{prooftree} 
\veczz, x{:}A \seq t{:}B
\justifies 
\Gamma, x{:}A \seq (\lambda x^A. t){:}A\fl B
\using abstraction\ /\ hypothetical\ reasoning 
\end{prooftree} 
\vspace*{3ex}

\begin{prooftree} 
\veczz,  \seq f {:}A\fl B \qquad \vecyy \seq u{:}A
\justifies \veczz,\vecyy \seq f(u){:}B 
\using application\ /\ modus\ ponens 
\end{prooftree} 
\end{center} 

\caption{The Curry-Howard isomorphism \cite{How69}. Upper case letters stand for propositions. 
On the left handside of ``$\seq$" one only has type assignments for variables, like $x:X$.
On the right hand side, before the ``:" there is a lambda term which encodes the proofs. 
Leaving out terms, that are on the left hand side of ``:" one get a standard natural deduction system for proposition logic as introduced by Gentzen. \cite{Gen34,Gen34b}
These rules infer a sequent from one or two sequents a sequent being a sequence of hypotheses and a formula on the right hand side, $Z_1,..,Z_p\seq C$ meaning: $(Z_1\& \cdots Z_p)\fl C$.} 
\end{figure} 

Functions in this setting are algorithms (that always converge) that represent  total recursive functions, the ones 
whose totality can be proved within the underlying logical system.  They are defined intentionally, how could one view them as usual functions? 
Functions can be viewed as morphisms in a cartesian closed category, cf. definition \ref{ccc} above --- in this logical setting,  $B^A$ is rather written $A\imp B$. 
Observe that the logical rules match categorical isomorphisms, for instance,  
$$A \imp (B \imp C) \cong (A \times B) \imp C$$  
Closed terms (i.e. terms without free variables) of type $U$ also can be viewed as maps 
from the terminal object $\mathbf{1}$ to $U$ ---  this view is supported by the fact, that $\mathbf{1}=\{*\}$ in the category of set. 

A function in the model interprets a function as a static notion (a ``set" of pairs $\langle x,f(x)\rangle$)  and not as a (functional) algorithm: indeed whenever a term/proof $t$ reduces to $t^\circ$ its interpretation as a morphism in the category is the same. 
%


Given that functions from $A$ to $B$ are definable by terms/proofs,  the set $\Hom(A,B)$ should be countable. Therefore, ``elements" in $B^A$, that are arrows from in $\textbf{1}$,  should be countable as well,
and this imposes a drastic restriction on the morphisms we consider in the category. The study of such models of intuitionistic proofs lead to linear logic as  we shall see in section \ref{linear}.

A result by Joyal (unpublished see e.g. \cite{LS86,Girard2011blindspot}) shows that all proofs of a given formula are equivalent in classical logic. Although there also exists a logical, syntactical analogous result by Girard and Lafont (see figure \ref{lafont}), Joyal  proved  a category theoretical formulation of this statement: 

 \begin{prop}
In  a cartesian closed category interpreting classical logic 
between any two objects (formulae)  $A$ and $B$ 
there is 
at most one morphism from $A$ to $B$ (at most one interpretation of proofs of $A\seq B$). 
 \end{prop}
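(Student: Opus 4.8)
\emph{Sketch of the intended proof.} The plan is to prove the stronger statement that \emph{every} hom-set $\Hom(A,B)$ has at most one element. The mechanism is to rewrite $\Hom(A,B)$, using the classical double-negation isomorphism, as a hom-set whose codomain is the object $\mathbf{0}$ interpreting $\bot$, and then to exploit that $\mathbf{0}$ is a \emph{strict} initial object in any cartesian closed category. I would begin by fixing the terminology: by a \emph{cartesian closed category interpreting classical logic} I mean a cartesian closed category in the sense of Definition~\ref{ccc}, additionally equipped with an initial object $\mathbf{0}$ (the interpretation of $\bot$), and such that for every object $A$ the canonical morphism $A\to\neg\neg A$ is an isomorphism, where $\neg A$ abbreviates the exponential $(A\imp\mathbf{0})$. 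This last clause is precisely the categorical content of \emph{tertium non datur} (equivalently of \emph{reductio ad absurdum}).

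The first ingredient I would prove is a lemma valid in \emph{any} cartesian closed category: the initial object is \emph{strict}, meaning that every morphism $h\colon X\to\mathbf{0}$ makes $X$ isomorphic to $\mathbf{0}$. The argument I have in mind: first observe that $\mathbf{0}\times X$ is itself an initial object, since for each object $Z$ the exponential adjunction gives $\Hom(\mathbf{0}\times X,Z)\cong\Hom(\mathbf{0},Z^X)$, which is a singleton because $\mathbf{0}$ is initial. Let $u$ be the unique morphism $\mathbf{0}\times X\to X$; the second projection $\pi_2$ is also such a morphism, so $u=\pi_2$. Then $u\circ\langle h,\mathrm{id}_X\rangle=\pi_2\circ\langle h,\mathrm{id}_X\rangle=\mathrm{id}_X$, while $\langle h,\mathrm{id}_X\rangle\circ u$ is an endomorphism of the initial object $\mathbf{0}\times X$ and hence the identity; so $X\cong\mathbf{0}\times X$, which is initial, whence $X\cong\mathbf{0}$. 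In particular $\Hom(X,\mathbf{0})\cong\Hom(\mathbf{0},\mathbf{0})$ is then a singleton.

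With strictness available, the conclusion should follow quickly. For arbitrary objects $A$ and $B$ I would combine the exponential adjunction (natural in its first argument) with the classical isomorphism $B\cong\neg\neg B=(\neg B\imp\mathbf{0})$ to obtain
$$\Hom(A,B)\ \cong\ \Hom\bigl(A,\ \neg B\imp\mathbf{0}\bigr)\ \cong\ \Hom\bigl(A\times\neg B,\ \mathbf{0}\bigr),$$
and then distinguish two cases. If this last hom-set is empty, so is $\Hom(A,B)$ and there is nothing to prove. Otherwise it contains some $h\colon A\times\neg B\to\mathbf{0}$, and the strictness lemma forces $A\times\neg B\cong\mathbf{0}$, so that $\Hom(A\times\neg B,\mathbf{0})\cong\Hom(\mathbf{0},\mathbf{0})$ has exactly one element. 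In either case $\Hom(A,B)$ has at most one element, which is the assertion.

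The adjunction bookkeeping is routine; the two points that actually carry the argument are the strictness lemma --- which is the genuine structural input and is in fact intuitionistically valid --- and the precise formulation of the classical hypothesis. I expect the latter, rather than any computation, to be the main conceptual hurdle: everything rests on the canonical arrow $A\to\neg\neg A$ being invertible (equivalently, on $\mathbf{0}$ being a dualising object for the cartesian closed structure), and it is exactly this collapse that, via the displayed isomorphism, propagates to the collapse of every hom-set. Note that, unlike some syntactic treatments (cf.\ the Girard--Lafont argument), no use of coproducts (disjunction) is made here: the phenomenon already lives in the fragment with $\land$, $\imp$ and $\bot$.
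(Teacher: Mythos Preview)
Your proof is correct and follows essentially the same route as the paper: both first establish that in any CCC the initial object is strict (via $\mathbf{0}\times X$ initial from the exponential adjunction, then $X\cong\mathbf{0}\times X$ using $\langle h,\mathrm{id}_X\rangle$ and $\pi_2$), and then transport $\Hom(A,B)$ through $B\cong\neg\neg B$ and the exponential adjunction to $\Hom(A\times\neg B,\mathbf{0})$. Your explicit empty/non-empty case split is slightly more careful than the paper's phrasing, but the argument is the same.
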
 
 \begin{proof} 
 The interpretation of classical logic needs an initial object $\mathbf{0}$ (false) and a negation $\lnot A=(A\fl\mathbf{0})$ which is involutive i.e. $A\cong ((A\fl \mathbf{0}) \fl \mathbf{0})$. 
 To see that in such a situation there exists at most one arrow between any two objects, 
 let us first see that in a \ccc\  with an initial object $\mathbf{0}$ when there is a map from $Z$ some object to $\mathbf{0}$, then $Z$ is initial as well: 
 \begin{enumerate}
 \item \label{Zx0} 
\emph{If $\mathbf{0}$ is initial, then is $\mathbf{0}\times Z$ is initial as well.} Indeed, in a \ccc\ , maps from 
$\mathbf{0}\times Z$ to any object $U$ are in bijection with maps from $\mathbf{0}$ to $U^Z=[Z\fl U]$. 
\item
\label{Zinit} 
If there exists an arrow $f$ from $Z$ to $\mathbf{0}$ then $Z$ is isomorphic to $\mathbf{0}\times Z$ and therefore $Z$ is initial. 
Indeed, $\pi_2\circ \langle f, Id_Z\rangle=Id_Z$ (by definition of $\pi_2$) and $\langle f, Id_Z\rangle\circ \pi_2=Id_{\mathbf{0}\times Z}$ because it goes from $\mathbf{0}\times Z$ to itself, which is initial because of \ref{Zx0} above. 
\end{enumerate} 
Maps from $A$ to $B$ correspond to maps from $A$ to $\neg\neg B \cong
(B \to \mathbf{0}) \to \mathbf{0}$ and thus to maps from $A \times (B \to \mathbf{0})$ to $\mathbf{0}$.
But from  \ref{Zinit}  it follows that such a map is an isomorphism and there is
precisely one such isomorphism since $\mathbf{0}$ is initial.
\end{proof}

\begin{figure} 
\begin{center} 
\begin{prooftree} 
\begin{prooftree} 
 \begin{prooftree} 
 \begin{prooftree} 
\delta_1
\leadsto 
 \seq F
 \end{prooftree} 
 \justifies 
 \seq F,K
 \using weakening 
 \end{prooftree} 
 \hspace*{5em} 
 \begin{prooftree} 
 \begin{prooftree} 
 \delta_2
\leadsto 
  \seq F
 \end{prooftree} 
 \justifies 
 \seq F,\lnot K
 \using weakening 
 \end{prooftree} 
 \justifies 
 \seq F, F
 \using cut
 \end{prooftree} 
 \justifies 
 \seq F
 \using contr.
 \end{prooftree} 
 \end{center} 
\caption{This proof defined from  $\delta_1$ and  $\delta_2$--- that are any two proofs of the same formula $F$ ----  reduces to $\delta_1$ or to $\delta_2$ followed by the exactly the same rules. Hence any two proofs of the same formula are equivalent in classical logic.} 
\label{lafont} 
\end{figure}

This result, simple enough to be presented in this introduction, clearly shows that intuitionistic logic 
is the natural logic of the \ccc\  interpretation of proofs --- in the previous section we already saw that intuitionistic  also is the logic of toposes which generalise set theory. 

\section{Martin-L\"of type theory and identity types} 
\label{mtt} 

One may further refine the operations (exponential, product) 
on the simple types of Church \cite{Church1940types} of the previous section \ref{lambda} as linear logic defined in next section \ref{linear} does, 
or consider a wider set of  types as Martin-L\"of type theory does.

Actually,  the most prominent feature of Martin-L\"of type theory is that there are
types which \emph{depend} on  terms  as e.g. $\mathrm{Vect}(n)$ for $n:N$, 
the type of vectors of length $n$. Propositions are considered as
types and, accordingly, predicates may be considered as dependent types.
The most important example of dependent types will be identity types
as discussed subsequently.  

Martin-L\"of type theory provides the
necessary technology to deal with the standard mathematical concept of
a family of sets. Moreover a dependent type can be considered as as
predicate; for example if we are given a dependent type $B(x)$ with $x:A$ (a family of types indexed by the
type $A$), we can think of \(A\) as a set and \(B(x)\) as a family of
sets, but in addition, if we consider \(B(x)\) as a predicate, given a hypothetical element \(a\) of \(A\)
and a hypothetical \(b\) in \(A(a)\) we can think of \(b\) as a
\emph{proof witness} that shows the truth of \(B(a)\).

Type theory as formalized by Martin-L\"of gives us access to the construction of
\begin{itemize} 
\item 
the type $(\Pi x:A) B(x)$  whose intuitive elements should be seen as functions $f$ that map a term  
$a:A$ to a term $f(a) : B(a)$. In case $B$ does not depend on $x$, the type
$(\Pi x:A) B(x)$ corresponds to the type $A\fl B$ of simply typed lambda 
calculus. When we see dependent types as predicates, we now have access to
the universal quantifier.
\item the type $(\Sigma x:A) B(x)$ whose elements should be seen are pairs $(a,b)$ with
$a : A$ and $b:B(a)$. If $B(x)$ does not depend on $x$, the type  
$(\Sigma x:A) B$ corresponds to the type $A\times B$ of simply types lambda 
calculus, and we also have access to the existential quantifier when
we think of a type as a predicate.
\end{itemize}

The presence of dependent types forces us to consider the following judgements

\begin{tabular}{ll} 
$\Gamma\seq Valid$ & The context $\Gamma$ is valid
\\ 
$\Gamma\seq A$  &in the context $\Gamma$, $A$ is a type. 
\\ 
$\Gamma\seq A=B$  & the types $A$ and $B$ are equal 
\\ 
$\Gamma\seq t:A$  &in the context $\Gamma$, $t$ is of type $A$ 
\\
$\Gamma\seq t=u: A$  & $t$ and $u$ are equal terms of type $A$. 
\end{tabular} 

This notion of equality is called \emph{judgemental} and can be decided
by normalisation. But it cannot be combined into propositions for which reason
one introduces the notion of \emph{propositional} equality. If $t,s :A$
one may form the types $Id_A(t,s)$ whose elements will be proofs that $t$ and
$s$ are equal. Reflexivity is given by the constructor $r_A(t) : Id_A(t,t)$
for $t:A$. As usual in type theory a constructor is accompanied by an
\emph{eliminator} which in case of identity types happens to be called $J$
and can be described as follows. Given a family of types $x,y:A, z:Id_A(x,y) 
\seq C$ and a term $x,y:A, z:Id_A(x,y) \seq d : C(x,y,z)$ one may consider
the term $x,y:A, z:Id_A(x,y) \seq J((x) d,z) : C(x,y,z)$ whose meaning is
provided by the reduction rule $ J((x) d,r_A(t)) = d[t/x]$. This is in 
accordance with functional programming where functions are defined by pattern
matching \emph{aka} structural recursion.

This is how equality is represented in \emph{intensional} type theory of 
\cite{ML73}. In \cite{ML84} judgemental and propositional equality get
identified via a so-called \emph{equality reflection} rule allowing one
to conclude $t =s : A$ from $p : Id_A(t,s)$. This, however, renders type
checking undecidable and, therefore, has been abandoned. Current proof checkers
as e.g. the widely used system \(\mathtt{Coq}\)~\cite{BC2004coq} are based on intensional type
theory. This is sometimes inconvenient but allows one to avoid the manipulation and storing of derivation trees for typing judgements as in the much older
\(\mathtt{NuPrL}\) system.

As said above in section \ref{sheaves}, toposes admit an internal logic in form
of intuitionistic higher order arithmetic. In this logic propositions are
not represented as types but as subobjects of the terminal object $1$. 
Accordingly, predicates on a type $A$ are represented as monomorphisms into $A$.
On the other hand, a good interpretation of Martin-L\"of type theory
needs a predicate on a type \(A\) to be represented by a morphism
\(B\to A\) which is more
general than just a monomorphism, since we want the ability to distinguish between
proof witnesses.  Thus we need a a distinguished class of morphisms 
(often called \emph{display maps}), which are subject to a few simple
conditions, the most important of which being that the pullback of a
display map with codomain \(A\) by \emph{any} morphism with same
codomain is again a display map.  This corresponds to being able to
substitute a term for a variable in a predicate.  It is then possible to
interpret the $\Pi$-types when a right adjoint always exists to
that pullback functor.  When this happens we have a \emph{display
  category with (dependent) products}, and when all maps of the
category can be used
as display maps, we have a
\emph{locally cartesian closed} categories.
There are many display categories with dependent products, but it
turned out to be harder to find examples that have a good, nontrivial
(i.e., non collapsed) versions of the intensional identity type. For example
locally cartesian closed categories are guaranteed to give trivial identity
types.  The first good example was given by M.~Hofmann and Th.~Streicher in~\cite{gpdtt},
where types were interpreted as groupoids, display maps as
\emph{fibrations of groupoids} (very easy to define), and $Id_A(x,y)$ as the set of
morphisms (necessarily isomorphisms) from $x$ to $y$.  This model was
only nontrivial in dimension one, in the sense that iterating the
identy type would land us on the old, trivial, nonintensional interpretation where identity
is just the diagonal.

It was realized that if a model of dependent types has an intensional
identity type, then we are very close to being able to do topology
(especially, homotopy theory) in
it.  This is because in such an identity type, we can view an
intuitive element of \(ID_{A}(x,y)\) (a proof that \(x\) is equal to
\(y\)) as a \emph{path from \(x\) to \(y\),} and we are able to do
many constructions of homotopy theory, with the additional bonus of
dependent products.

Thus inspirations for good models of identity types were bound to come from algebraic
topology, and this is the case for the first completely non-degenerate model, where paths in higher dimensions
(higher homotopies) do not collapse, and which was found
independently by Streicher and Voevodsky around 2006 (see e.g. \cite{Voevodsky2006lambda,Streicher2006uppsala}. 
The base category is the presheaf topos of \emph{simplicial sets}, the
most studied of all models in combinatorial topology, and families of types
(i.e., display maps) are interpreted as \emph{Kan fibrations}, which
forces ordinary, non-dependent types to be~\emph{Kan complexes}. 
As expected, the identity type $Id_A(x,y)$ is the standard
construction for the \emph{path
  object} in the sense of the usual Quillen model structure on
simplicial sets.\cite{quillen1967homotopical}

Another inspiration from topology, namely in the theory of fiber
bundles, has led V.~Voevodsky~\cite{ACV2013} to propose the
\emph{Univalence Axiom} in what is now called \emph{Homotopy Type Theory}
(HoTT), which applies to universes, like those found in
\(\mathtt{Coq}\) and those defined by Martin-L\"of, and  can be viewed
as the statement that in a ``good'' universe \emph{isomorphic types are equal}.

The present issue contains two papers about categorical models of identity 
types, both having links to homotopy type theory:
\begin{itemize}
\item \emph{Fran\c{c}ois Lamarche in his paper \emph{Modelling Martin-L\"of Type Theory in Categories} proposes 
a simple model of Martin-L\"of type theory that includes both dependent products and the identity types.}
\item \emph{Thomas Streicher in the next paper describes \emph{A Model of Type Theory in Simplicial Sets} in which the Voevodsky Univalence axiom holds.} 
\end{itemize}

Although little work has been done in this direction,  categorical
models of type theory should be very appealing from the viewpoint of natural language semantics, to be evoked in section
\ref{applications}.

Indeed, natural language semantics, philosophy of language, analytic philosophy 
has always been interested in models (classical models,  possibles worlds, situations) and nowadays,  natural language semantics  is often formalised and computed  within  type theory 
--- as exemplified in the present issue, see section \ref{applications}.

\section{From a particular categorical interpretation to linear logic} 
\label{linear} 

While Martin-L\"of type theory discussed in previous section \ref{mtt} 
extends the expressive power of simply typed theory with dependent types, 
one can also have a closer look at the intuitionistic logical connectives 
$\&$  and $\fl$  from a computational perspective bearing in mind that an involutive negation like the one of classical logic brings many convenient and elegant properties. One thus discompose the connective of intuitonistic logic and finds a classical but constructive logic called \emph{linear logic}.\cite{Gir87,Girard2011blindspot}. 

Linear logic, present in two papers of the present issue, arose from the study of a particular  cartesian closed category that interprets proofs of intuitionistic logic, namely coherence spaces
and deserve some extra information: \emph{coherence spaces}, which are discussed in the present issue 
regarding their relation to formal ontologies, see section \ref{applications}.

A coherence space is basically a simple graph (possibly infinite, but without loops nor multiple edges)
but entities that interpret terms or proofs  in a coherence space are not its vertices but the cliques of this graph, i.e. its complete subgraphs. 
A stable map from $A$ to $B$  maps cliques of $A$ to cliques of $B$, and it enjoys certain properties: 
monotony ($a\subset b \Rightarrow F(a)\subset F(b)$), preservation of directed unions ($F(\cup\uparrow a_i)=\cup F(a_i)$) and of intersections inside a clique 
(when $a\cup a'$ is a clique of $A$ one has $F(a\cap b)=F(a)\cap F(b)$) 
The important point is that a stable function  from a coherence space $A$ to another one $B$ can  be viewed as a clique of a larger coherence space $A\fl B$, whose vertices are $(a,\beta)$ where $\beta$ is a vertex of $B$ and $a$ a \emph{finite} clique of $A$, such that $\beta\in F(a)$ and $a$ is minimal for this property --- $F$ being stable there always exists such a finite $a$,  and one of them $a$ is minimal and in fact the minimum. 
Two vertices  $(a,\beta)$ and  $(a',\beta')$ of $A\fl B$ are said to be coherent whenever 
$a\cup a'$ is a clique of $A$ implies that $\beta$ and $\beta'$ are coherent in $B$ 
and when furthermore $a$ and $a'$ are distinct so are $\beta$ and $\beta'$. 

One then has a bijection between stable functions from $A$ to $B$ and  the cliques of the coherence space $A\imp B$. There is also a product of coherence spaces, which is the disjoint union of the two graphs, plus edges between any vertex of $A$ and any vertex of $B$.

The categorical product corresponds to the rule  from $\Gamma\seq A$ and  $\Gamma\seq B$ infer 
 $\Gamma\seq A \& B$,  while the other product $\otimes$ corresponds to the rule 
 from $\Gamma\seq A$ and  $\Delta\seq B$ infer  $\Gamma,\Delta\seq A \otimes B$.
 These two conjunctions of linear logic cannot be distinguished in intuitionistic logic, because of the weakening and contraction rules given in figure \ref{curry}. 
 
  Interpreting proofs in coherence spaces, where operation on coherence spaces correspond to logical connectives lead to linear logic. Indeed, besides the product described above there is another ``product", which is not a categorical product,  
whose vertices are pairs of vertices, with an edge  between $(a,b)$ and $(a',b')$ whenever there is an edge $\{a,a'\}$ and an edge $\{b,b'\}$ in $B$ --- $a=a'$ or $b=b'$ is allowed as well, but not both since there are no loops in simple graphs. 

 Besides these pairs of conjunction, one discovers an involutive negation $A^\perp$ 
 whose simple graph is the complement of the simple graph (simple graphs contain no loop). 
 Using De Morgan laws it maps the two aforementioned conjunctions to  two disjunctions respectively written $\oplus$ (additive disjunction) and $\wp$ (multiplicative disjunction). 
 
There also exists a unary operator $!$ which relates linear logic (resource sensitive) to intuitionistic logic. 
The coherence space $!A$ is defined as follows: 
its vertices are the finite cliques of $A$, and two vertices $a$ and $a'$ of $!A$, that is to say two finite cliques of $A$, are coherent in $!A$ whenever the union $a\cup a'$ is a clique of $A$. 
This operator turns the additive conjunction $\&$ into the multiplicative conjunction $\otimes$: 
$!(A\&B)\equiv !A\otimes !B$. 

A kind  of classical implication can be defined from the $\wp$ disjunction: $A^\perp\wp B$,
it is written as $A\multimap B$, and the intuitionistic implication can be defined as: 
$$A\fl B = !A\multimap B$$ 

Linear logic was provided with a syntax, both with sequent calculus and with proofnets that identify proofs up to rule permutations. Propositional linear logic is also endowed with a truth value semantics \emph{phase semantics}. 

\emph{In this issue, the paper entitled \emph{Relational semantics for full linear logic} by Dion Coumans, Mai Gehrke,  Lorijn van Rooijen proposes a Kripke style semantics for full propositional linear logic, i.e. with, additives, multiplicatives and exponentials that allow the addition of proper axioms.}

\section{Linguistic applications: compositional semantics, lexical semantics, and knowledge representation} 
\label{applications} 

Natural language semantics is usually expressed by first order (some times higher order) logic or predicate calculus,  single sorted as Frege wished to, and computed according to Frege's compositionally principle:  the meaning of the compound is a function of the meaning of its parts. 
\cite{vBtM2010,partee02,DavisGillon2004intro}. 

This is easily implemented in lambda calculus where anything is a function, as noticed by Montague in 1970 \cite{montague:formal}. 
Typed $\lambda$-terms for representing logical formulae are usually defined out of two  base types, $\eee$ for individuals  (also known as \textbf{e}ntities) and  $\ttt$ for propositions (which have a \textbf{t}ruth value). Logical formulae can be defined in this typed $\lambda$-calculus as first observed by Church in 1940 \cite{Church1940types}, but the combination of word meaning (partial formulae) according to the syntactic structure of the sentence is due to Montague. 

This early use of lambda calculus, where formulae are viewed as typed lambda terms, can not be merged with the more familiar view of  typed lambda terms as proofs evoked in the section \ref{lambda}. The proof which such a typed lambda term corresponds to is simply the proof that the formula is well formed, e.g. that a two-place predicate is properly applied to \emph{two} individual terms of type $\eee$ and not to more or less arguements, nor to arguments of a different type etc. 
This initial vision of lambda calculus was designed for a proper handling of substitution in deductive systems \`a la Hilbert. 
One needs constants for the logical quantifiers and connectives:

\begin{center}
$
\begin{array}[t]{r|l} 
\multicolumn{2}{c}{\mbox{Logical\ connectives\ and\ quantifiers}}\\ 
\mbox{Constant} & \mbox{Type}\\ \hline 
	\textrm{\&, and} & \ttt \fl (\ttt \fl \ttt) \\ 
	\textrm{$\lor$, or} & \ttt \fl (\ttt \fl \ttt) \\ 
	\textrm{$\Rightarrow$, implies} & \ttt \fl (\ttt \fl \ttt)\\  \hline 
	\exists & (\eee \fl \ttt) \fl \ttt \\ 
	\forall & (\eee \fl \ttt) \fl \ttt \\ 
\end{array} 
$ 
\hspace{8em} 
$
\begin{array}[t]{r|l} 
\multicolumn{2}{c}{\mbox{Predicates\ and\ constants}}\\ 
\mbox{Constant} & \mbox{Type}\\ \hline 
	\mathit{defeated,\ldots} & \eee \fl (\eee \fl \ttt) \\ 
	\mathit{won, voted,\ldots} & (\eee \fl \ttt) \\ 
	\mathit{Liverpool, Leeds,\ldots} & \eee\\ 
	\mathit{\ldots} & \ldots
\end{array} 
$
\end{center} 

\noindent as well as predicates for the precise language to be described --- 
a binary predicate like $won$ has the type $\eee\fl\eee\fl\ttt$.

A small example goes as follows. Assume that the  syntactical analysis of the sentence ``\emph{Some club defeated Leeds.}" is 
\begin{center}
(some\ (club)) (defeated\ Leeds) 
\end{center} 
where the function is always the term on the left. If the semantic terms are as in the lexicon in figure \ref{semanticlexicon}, placing the semantical terms in place of the words yields a large $\lambda$-term that can be reduced: 

\begin{figure} 
\begin{center} 
\begin{tabular}{ll} \hline 
\textbf{word} &  \textbf{\itshape semantic type $u^*$}\\ 
& \textbf{\itshape  semantics~: $\lambda$-term of type $u^*$}\\ 
&  {\itshape  $x\type{v}$ the variable or constant $x$ 
is of type $v$}\\ \hline 
\textit{some} 
& $(\eee\fl \ttt)\fl ((\eee\fl \ttt) \fl \ttt)$\\ 
& $\lambda P\type{\eee\fl \ttt}\  \lambda Q\type{\eee\fl \ttt}\  
(\exists\type{(\eee\fl \ttt)\fl \ttt}\  (\lambda x\type{\eee}  (\et\type{\ttt\fl (\ttt\fl \ttt)} (P\ x) (Q\ x))))$ \\  \hline 
\textit{club}  & $\eee\fl \ttt$\\ 
& $\lambda x\type{\eee} (\texttt{club}\type{\eee\fl \ttt}\  x)$\\  \hline 
\textit{defeated} & $\eee\fl (\eee \fl \ttt)$\\ 
& $\lambda y\type{\eee}\  \lambda x\type{\eee}\  ((\texttt{defeated}\type{\eee \fl (\eee \fl \ttt)}\  x)  y)$ \\  \hline 
\textit{Leeds} &$\eee$ \\ &  Leeds 
\end{tabular}
\end{center} 
\caption{A simple semantic lexicon} 
\label{semanticlexicon}
\end{figure}

$$
\begin{array}{c} 
\Big(\big(\lambda P\type{\eee\fl \ttt}\ \lambda Q\type{\eee\fl \ttt}\  (\exists\type{(\eee\fl \ttt)\fl \ttt}\  (\lambda x\type{\eee}  (\et (P\ x) (Q\ x))))\big)
\big(\lambda x\type{\eee} (\texttt{club}\type{\eee\fl \ttt}\  x)\big)\Big) \\ 
\Big(
\big(\lambda y\type{\eee}\  \lambda x\type{\eee}\  ((\texttt{defeated}\type{\eee\fl (\eee\fl \ttt)}\  x)  y)\big)\ Leeds\type{\eee}\Big)\\ 
\multicolumn{1}{c}{\downarrow \beta}\\ 
\big(\lambda Q\type{\eee\fl \ttt}\  (\exists\type{(\eee\fl \ttt)\fl \ttt}\  (\lambda x\type{\eee}  (\et\type{\ttt\fl (\ttt\fl \ttt)}  
(\texttt{club}\type{\eee\fl \ttt}\  x) (Q\ x))))\big)\\ 
\big(\lambda x\type{\eee} \ ((\texttt{defeated}\type{\eee\fl (\eee \fl \ttt)}\  x)  Leeds\type{\eee})\big)\\ 
\multicolumn{1}{c}{\downarrow \beta}\\ 
\big(\exists\type{(\eee\fl \ttt)\fl \ttt}\  (\lambda x\type{\eee}  (\et (\texttt{club}\type{\eee\fl \ttt}\  x) ((\texttt{defeated}\type{\eee\fl (\eee\fl \ttt)}\  x)  Leeds\type{\eee})))\big)
\end{array}
$$ 

This $\lambda$-term of type $\ttt$ that can be called the \emph{logical form} of the sentence, represents the following formula of predicate 
calculus (admittedly more pleasant to read): 

$$\exists x:\eee\  (\texttt{club}(x)\ \et\ \mathit{defeated}(x,Leeds))$$

Thus defined, computational semantics looks a miracle. But this miracle does not account for selectional restriction (a \ma{book} cannot \ma{bark}), does not tell anything of the relations between predicates (a \ma{book} can be \ma{read}), nor of the correspondence of predicates with concepts (a book is both an informational and a physical entity),  nor of the relation between the logical language for semantics and world knowledge and ontologies. 
Issues such as selectional restriction have been much 
discussed in type theoretical framework  e.g. in 
\cite{Cooper2006dag,asher-typedriven,BMRjolli,Asher2011wow,Luo2011lacl}. 

\emph{In his paper \emph{Selectional Restrictions, Types and Categories}, Nicholas Asher  discusses the introduction of types as objects of a category  (instead of a single $\eee$) endowed with categorical operations that go beyond the logical ones: he is thus able to correctly account for selectional restriction, felicitous and infelicitous 
copredications.} 

\emph{This framework in which one composes functions to compute meanings and ultimately the truth values in a given model is given a categorical formalisation by Anne Preller in her paper \emph{Natural Language Semantics in Biproduct Dagger Categories}. Starting with pregroup grammars defined in an algebraic structure that can be viewed as non commutative  linear logic in which $\wp=\otimes$, she computes the meaning categories with extra operations, as finite dimensional vector spaces or two sorted functions over finite sets. She can then account of meaning with  the usual set theoretic interpretations or with bags of words vectors used for text classification. }  

An underlying question, in relation to the interpretation of common nouns and to the bags of words approach is what a concept or a base type is, and the interrelations between base types. 
For instance one can say \ma{a cat sleeps}, because \ma{animals} can \ma{sleep} and \ma{cats} are  \ma{animals}. 
In the kind of model used in semantics ontological relations are quite common, but here is no specific structure to represent these ontological relations. 

\emph{The paper entitled \emph{Formal ontologies and coherent spaces} by Michel Abrusci, Christophe Fouquer\'e and Marco Romano exploits the structure of coherent spaces that we discussed in section \ref{linear} to give a formal account of the relation between entities and concepts, encoding properties of formal ontologies.}

\paragraph{Thanks:} We are indebted to Mai Gehrke, Alain Lecomte, 
Anne Preller,  Steve Vickers, Gilles Z\'emor, and  especially to Fran\c{c}ois Lamarche and Thomas Streicher for a quick but efficient rereading of this introduction, which they all contributed to improve. Remaining errors and confusions are ours. 

\bibliography{bigbiblio} 

\providecommand{\bysame}{\leavevmode\hbox to3em{\hrulefill}\thinspace}
\providecommand{\MR}{\relax\ifhmode\unskip\space\fi MR }
\providecommand{\MRhref}[2]{%
  \href{http://www.ams.org/mathscinet-getitem?mr=#1}{#2}
}
\providecommand{\href}[2]{#2}
\begin{thebibliography}{10}

\bibitem{SGA4}
Michael Artin, Alexandre Grothendieck, and Jean-Louis Verdier (eds.),
  \emph{Th{\'e}orie des topos et cohomologie {\'e}tale des sch{\'e}mas},
  Lecture Notes in Mathematics, no. 269 \& 270, {S}pringer-{V}erlag, 1972,
  S{\'e}minaire de G{\'e}om{\'e}trie Alg{\'e}brique du Bois-Marie (1963-1964).

\bibitem{Asher2011wow}
Nicholas Asher, \emph{Lexical meaning in context -- a web of words}, Cambridge
  University press, 2011.

\bibitem{asher-typedriven}
Nicolas Asher, \emph{A type driven theory of predication with complex types},
  Fundamenta Informaticae \textbf{84} (2008), no.~2, 151--183.

\bibitem{ACV2013}
Steve Awodey, Thierry Coquand, and Vladimir Voevodsky (eds.), \emph{Homotopy
  type theory: univalent foundations of mathematics}, Institute of Advance
  Studies, Princeton, 2013.

\bibitem{BMRjolli}
{C}hristian {B}assac, {B}runo {M}ery, and {C}hristian {R}etor{\'e},
  \emph{{T}owards a {T}ype-{T}heoretical {A}ccount of {L}exical {S}emantics},
  {J}ournal of {L}ogic {L}anguage and {I}nformation \textbf{19} (2010), no.~2,
  229--245 ({A}nglais), \url{http://hal.inria.fr/inria-00408308/}.

\bibitem{BC2004coq}
Yves Bertot and Pierre Cast\'eran, \emph{Interactive theorem proving and
  program development. coq'art: The calculus of inductive constructions}, Texts
  in Theoretical Computer Science, Springer Verlag, 2004.

\bibitem{BoileauJoyal1981}
Andr\'e Boileau and Andr\'e Joyal, \emph{La logique des topos}, Journal of
  Symbolic Logic \textbf{46} (1981), no.~1, 6--16.

\bibitem{Borceux1994hdbk3}
Francis {Borceux}, \emph{Handbook of categorical algebra. 3: {C}ategories of
  sheaves.}, Cambridge: Cambridge Univ. Press, 1994 (English).

\bibitem{Cartier1979bourbaki}
Pierre Cartier, \emph{Logique, cat{\'e}gories et faisceaux (d'apr{\`e}s {F}.
  {L}awvere et {M}. {T}ierney) --- expos{\'e} 513}, S{\'e}minaire Bourbaki,
  30{\`e}me ann{\'e}e (1977/1978), Lecture Notes in Mathematics, vol. 710,
  Springer, 1979, pp.~123--146.

\bibitem{Church1940types}
Alonzo Chruch, \emph{A formulation of the simple theory of types}, Journal of
  Symbolic Logic \textbf{5} (1940), 56--68.

\bibitem{CIardelli2011tacl}
Ivano~Alessandro Ciardelli, \emph{A canonical model for presheaf semantics},
  Tech. report, INRIA, 2011, Presented at \emph{Topology, Algebra and
  Categories in Logic (TACL) 2011, Marseilles}.

\bibitem{cohen1966set}
Paul~J. Cohen, \emph{Set theory and the continuum hypothesis}, Mathematics
  lecture note series, W. A. Benjamin, 1966.

\bibitem{Cooper2006dag}
Robin Cooper, \emph{A record type theoretic account of copredication and
  dynamic generalized quantification}, Philosophical communications, web series
  \textbf{35} (2006), 75--88.

\bibitem{DavisGillon2004intro}
Steven Davis and Brendan~S. Gillon, \emph{Introduction (to ``semantics, a
  reader")}, Semantics: a reader (Steven Davis and Brendan~S. Gillon, eds.),
  Oxford University Press, New York, 2004, pp.~1--130.

\bibitem{Gen34}
Gehrard Gentzen, \emph{Untersuchungen {\"u}ber das logische {S}chlie\ss en
  {I}}, Mathematische {Z}eitschrift \textbf{39} (1934), 176--210, Traduction
  Fran\c caise de R.~Feys et J.~Ladri\`ere: Recherches sur la d\'eduction
  logique, Presses Universitaires de France, Paris, 1955.

\bibitem{Gen34b}
\bysame, \emph{Untersuchungen {\"u}ber das logische {S}chlie\ss en {II}},
  Mathematische Zeitschrift \textbf{39} (1934), 405--431, Traduction fran\c
  caise de J.~Ladri\`ere et R.~Feys: Recherches sur la d\'eduction logique,
  Presses Universitaires de France, Paris, 1955.

\bibitem{Gir87}
Jean-Yves Girard, \emph{Linear logic}, Theoretical Computer Science \textbf{50}
  (1987), no.~1, 1--102.

\bibitem{Girard2011blindspot}
\bysame, \emph{The blind spot -- lectures on logic}, European Mathematical
  Society, 2011.

\bibitem{Girard2012normativity}
\bysame, \emph{Normativity in logic}, Epistemology versus Ontology (Peter
  Dybjer, Sten Lindstr{\"o}m, Erik Palmgren, and G{\"o}ran Sundholm, eds.),
  Logic, Epistemology, and the Unity of Science, vol.~27, Springer, 2012,
  pp.~243--263.

\bibitem{GLT88}
Jean-Yves Girard, Yves Lafont, and Paul Taylor, \emph{Proofs and types},
  Cambridge Tracts in Theoretical Computer Science, no.~7, Cambridge University
  Press, 1988.

\bibitem{gpdtt}
Martin Hofmann and Thomas Streicher, \emph{The groupoid interpretation of type
  theory}, Twenty-five years of Martin-L\"of type theory (G.~Sambin and
  J.~Smith, eds.), Oxford University Press, 1998, pp.~83--111.

\bibitem{How69}
William~A. Howard, \emph{The formulae-as-types notion of construction}, To H.B.
  Curry: Essays on Combinatory Logic, $\lambda$-calculus and Formalism
  (J.~Hindley and J.~Seldin, eds.), Academic Press, 1980, pp.~479--490.

\bibitem{KK86}
William Kneale and Martha Kneale, \emph{The development of logic},
  3$^{\textrm{rd}}$ ed., Oxford University Press, 1986.

\bibitem{Kripke59jsl}
Saul Kripke, \emph{A completeness theorem in modal logic}, J. Symb. Log.
  \textbf{24} (1959), no.~1, 1--14.

\bibitem{LS86}
Joachim Lambek and Phil~J. Scott, \emph{Introduction to higher order
  categorical logic}, Cambridge Studies in Advanced Mathematics, vol.~7,
  Cambridge University Press, 1986.

\bibitem{Lawvere1970congres}
Francis~William Lawvere, \emph{Quantifiers and sheaves}, Actes du congr{\`e}s
  international des math{\'e}maticiens (Marcel Berger, Jean Dieudonn{\'e}, Jean
  Leray, Jacques-Louis Lions, Paul Malliavin, and Jean-Pierre Serre, eds.),
  Gauthier-Villars, 1970, pp.~329--334.

\bibitem{Lawvere72}
Francis~William Lawvere (ed.), \emph{Toposes, algebraic geometry, and logic},
  Lecture Notes in Computer Science, vol. 274, Springer, Berlin, Heidelberg,
  New York, 1972.

\bibitem{Luo2011lacl}
Zhaohui Luo, \emph{Contextual analysis of word meanings in type-theoretical
  semantics}, in Pogodalla and Prost \cite{LACL2011}, pp.~159--174.

\bibitem{Mac71}
Saunders Mac~Lane, \emph{Categories for the working mathematician},
  Springer-Verlag, Berlin, 1971.

\bibitem{MacLaneMoerdijk1992}
Saunders Mac~Lane and Ieke Moerdijk, \emph{{Sheaves in geometry and logic: a
  first introduction to topos theory.}}, {Universitext. New York etc.:
  Springer-Verlag. xii, 627 p. }, 1992 (English).

\bibitem{ML73}
Per Martin-L{\"o}f, \emph{An intuitionistic theory of types: predicative part},
  Logic Colloquium, Bristol 19731973, {S}tudies in {L}ogic and {F}oundations of
  {M}athematics, vol.~80, North Holland, 1975, pp.~73--118.

\bibitem{ML84}
\bysame, \emph{Intuitionistic type theory}, Bibliopolis, Napoli, 1984, (Lecture
  Notes by G.~Sambin).

\bibitem{Mellies2012lics}
Paul-Andr{\'e} Melli{\`e}s, \emph{Game semantics in string diagrams}, LICS,
  IEEE, 2012, pp.~481--490.

\bibitem{montague:formal}
Richard Montague, \emph{English as a formal language}, Linguaggi nella Societa
  e nella Tecnica (Bruno Visentini, ed.), Edizioni di Communit{\`a}, Milan,
  Italy, 1970, (Reprinted in R. Thomason (ed) \emph{The collected papers of
  Richard Montague} Yale University Press, 1974.), pp.~189--224.

\bibitem{MoortgatMoot2013grishin}
Michael Moortgat and Richard Moot, \emph{Proof nets for the {L}ambek-{G}rishin
  calculus}, Quantum Physics and Linguistics: A Compositional, Diagrammatic
  Discourse (Chris Heunen, Mehrnoosh Sadrzadeh, and Edward Grefenstette, eds.),
  Oxford University Press, 2013, pp.~283--320.

\bibitem{LACL2011}
Sylvain Pogodalla and Jean-Philippe Prost (eds.), \emph{Logical aspects of
  computational linguistics - 6th international conference, lacl 2011,
  montpellier, france, june 29 - july 1, 2011. proceedings}, LNCS, vol. 6736,
  Springer, 2011.

\bibitem{Pollard2011lacl}
Carl Pollard, \emph{Are (linguists') propositions (topos) propositions?}, in
  Pogodalla and Prost \cite{LACL2011}, pp.~205--218.

\bibitem{partee02}
Paul Portner and Barbara~H. Partee (eds.), \emph{Formal semantics: The
  essential readings}, Blackwell Publishers, 2002.

\bibitem{quillen1967homotopical}
Daniel~G. Quillen, \emph{Homotopical algebra}, Lecture Notes in Mathematics,
  Springer, 1967.

\bibitem{Streicher2006uppsala}
Thomas Streicher, \emph{Identity types and weak omega-groupoids}, Identity
  Types - Topological and Categorical Structure, 2006, Uppsala talk available
  from \url{http://www.mathematik.tu-darmstadt.de/~streicher/}.

\bibitem{TD88v2}
Anne Troelstra and Dirk van Dalen, \emph{Constructivism in mathematics (vol.
  2)}, Studies in Logic and the Foundations of Mathematics, vol. 123,
  North-Holland, 1988.

\bibitem{vBtM2010}
Johan van Benthem and Alice ter Meulen, \emph{Handbook of logic and language},
  2nd ed., Elsevier insights, Elsevier Science, 2010.

\bibitem{vanDalen2013}
Dirk van Dalen, \emph{Logic and structure}, fifth ed., Universitext,
  Springer-Verlag, 2013.

\bibitem{Voevodsky2006lambda}
Vladimir Voevodsky, \emph{A very short note on homotopy lambda calculus}, Tech.
  report, Institute for Advanced Study, Princeton, 2006.

\end{thebibliography}
\bibliographystyle{amsplain} 
\end{document}